\documentclass[12pt]{amsart}
\usepackage{amsfonts}
\usepackage{amsmath}
\usepackage{amssymb}
\usepackage[margin=1in]{geometry}
\usepackage{graphicx}
\setcounter{MaxMatrixCols}{30}
\usepackage{hyperref}
\usepackage{xcolor}
\usepackage{mathtools}

\newcommand{\R}{\mathbb{R}}
\newcommand{\C}{\mathbb{C}}

\renewcommand{\L}{L}

\newtheorem{theorem}{Theorem}[section]

\newtheorem{lemma}[theorem]{Lemma}

\DeclareMathOperator{\Real}{Re}
\DeclareMathOperator{\Imaginary}{Im}

\DeclareMathOperator\sign{sign}

\theoremstyle{definition}

\numberwithin{equation}{section}

\begin{document}
\title[Optimal decay of semi-uniformly stable semigroups]{Optimal decay of semi-uniformly stable operator semigroups with empty spectrum}

\author[M. Callewaert]{Morgan Callewaert}
\address{Department of Mathematics: Analysis, Logic and Discrete Mathematics\\ Ghent University\\ Krijgslaan 281\\ B 9000 Gent\\ Belgium}
\email{morgan.callewaert@UGent.be}

\author[L. Neyt]{Lenny Neyt}
\thanks{The research of L. Neyt was funded in whole by the Austrian Science Fund (FWF)
10.55776/ESP8128624. For open access purposes, the author has applied a CC BY public copyright
license to any author-accepted manuscript version arising from this submission.}
\address{University of Vienna\\Faculty of Mathematics\\Oskar-Morgenstern-Platz 1\\1090 Wien\\ Austria}
\email{lenny.neyt@univie.ac.at}

\author[J. Vindas]{Jasson Vindas}
\thanks{The work of J. Vindas was supported by Ghent University through the grant bof/baf/4y/2024/01/155}
\address{Department of Mathematics: Analysis, Logic and Discrete Mathematics\\ Ghent University\\ Krijgslaan 281\\ B 9000 Gent\\ Belgium}
\email{jasson.vindas@UGent.be}

\subjclass[2020]{Primary 47D06; Secondary 34G10, 40E05, 44A10.}
\keywords{Strongly continuous operator semigroups; semi-uniform stability; equilibrium; rates of decay; Tauberian theorems for Laplace transforms}

\begin{abstract}
We show that it is impossible to quantify the decay rate of a semi-uniformly stable operator semigroup based on sole knowledge of the spectrum of its infinitesimal generator. 
More precisely, given an arbitrary positive function $r$ vanishing at $\infty$, we construct a Banach space $X$ and a bounded semigroup $ (T(t))_{t \geq 0}$ of operators on it whose infinitesimal generator $A$ has empty spectrum $\sigma(A)=\varnothing$, but for which, for some $x \in X$, 
$$
\limsup_{t\to\infty} \frac{\|T(t)A^{-1}x\|_{X}}{r(t)}=\infty.
$$
\end{abstract}

\maketitle

\section{Introduction}\label{Introduction}
Stability notions play a central role in the modern theory of $C_0$-semigroups. They lie at the heart of operator-theoretic methods for the asymptotic analysis of solutions to important classes of evolution equations.  We refer to \cite{A-B-H-N-VVLaplaceTransCauchyProb,C-T2007,vanNeervenbook} for excellent accounts on this traditional subject.

Significant progress has been made
in stability theory for operator semigroups in the past two decades. 
Many of its developments center around the concept of \emph{semi-uniform} stability and are intimately connected with complex Tauberian theorems for Laplace transforms of Ingham-Karamata type \cite{QuantifiedVersionsInghamTheorem,K-TaubTh}. 

Let \(\mathcal{T}=(T(t))_{t\geq 0}\) be a $C_{0}$-semigroup on a Banach space \(X\) and let $A$ be its infinitesimal generator. The semigroup is called semi-uniformly stable if $\mathcal{T}$ is (uniformly) bounded, $0\notin \sigma(A)$, and satisfies the decay condition
\begin{equation}
				\label{SemigroupLittleO}
				\lim_{t\to\infty}\| T(t) A^{-1} \|_{\L(X)} = 0.	
						\end{equation}
See the survey article \cite{C-S-T-SemiUniformStabOpSemigroupsEnergyDecayDampedWaves} for an overview of properties of such semigroups. A key feature of semi-uniformly stable semigroups is that they can be completely characterized in terms of the spectrum of their infinitesimal generators. The following precise description of semi-uniform stability goes back to the seminal work \cite{A-B88} (see also \cite{B-D-NonUniformStabBoundedSemiGroupsBanachSp,C-S-T-SemiUniformStabOpSemigroupsEnergyDecayDampedWaves}).

	\begin{theorem}
		\label{TheoremSemigroupQualitative}
		Let $\mathcal{T} $ be a bounded $C_{0}$-semigroup on a Banach space $X$ with generator $A$.
		Then, $\mathcal{T}$ is semi-uniformly stable 
		if and only if $\sigma(A) \cap i \R = \varnothing$.
	\end{theorem}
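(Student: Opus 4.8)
\smallskip
\noindent\emph{Proof strategy.} I would follow the classical route of Arendt and Batty \cite{A-B88}, treating the two implications separately. Throughout, set $M:=\sup_{t\ge 0}\|T(t)\|_{\L(X)}<\infty$. A preliminary observation used in both directions: since $(\lambda-A)^{-1}=\int_{0}^{\infty}e^{-\lambda t}T(t)\,dt$ converges in $\L(X)$ whenever $\Real\lambda>0$, the open right half-plane lies in $\rho(A)$, so $\sigma(A)$ is contained in the closed left half-plane.

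\smallskip
\noindent\emph{Necessity of $\sigma(A)\cap i\R=\varnothing$.} Here the plan is to argue by contradiction. Assuming $\mathcal T$ is semi-uniformly stable, we already have $0\notin\sigma(A)$, so it remains to exclude $is\in\sigma(A)$ for $s\in\R\setminus\{0\}$. If such $is$ existed, it would be a boundary point of $\sigma(A)$ (the spectrum sitting in the closed left half-plane), hence an approximate eigenvalue, yielding unit vectors $x_{n}\in D(A)$ with $(A-is)x_{n}\to 0$. I would then record the two elementary bounds
\[
\|T(t)x_{n}-e^{ist}x_{n}\|\le Mt\,\|(A-is)x_{n}\|,\qquad
\|A^{-1}x_{n}-(is)^{-1}x_{n}\|\le |s|^{-1}\|A^{-1}\|_{\L(X)}\,\|(A-is)x_{n}\|,
\]
the first via the variation-of-parameters identity $T(t)x_{n}=e^{ist}x_{n}+\int_{0}^{t}e^{is(t-u)}T(u)(A-is)x_{n}\,du$, the second via $A^{-1}x_{n}-(is)^{-1}x_{n}=(is)^{-1}A^{-1}(A-is)x_{n}$. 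Since $A^{-1}$ commutes with $T(t)$, these combine to give $\|T(t)A^{-1}x_{n}-(is)^{-1}e^{ist}x_{n}\|\to 0$ as $n\to\infty$ for each fixed $t$, so $\|T(t)A^{-1}\|_{\L(X)}\ge|s|^{-1}>0$ for all $t\ge0$, contradicting \eqref{SemigroupLittleO}. This direction is the easy one and contains no real obstacle.

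\smallskip
\noindent\emph{Sufficiency.} Assume $\sigma(A)\cap i\R=\varnothing$; combined with the preliminary observation this gives $\sigma(A)\cap\{\lambda:\Real\lambda\ge 0\}=\varnothing$, so $A^{-1}\in\L(X)$ and $(\lambda-A)^{-1}$ is holomorphic on an open neighborhood $U$ of the closed right half-plane. I would study $q(t):=T(t)A^{-1}$ as an $\L(X)$-valued function. Because $A^{-1}X\subseteq D(A)$ and $q(t)x-q(s)x=\int_{s}^{t}T(u)x\,du$ for $x\in X$, $q$ is bounded (by $M\|A^{-1}\|_{\L(X)}$) and Lipschitz, hence in particular uniformly continuous. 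Its Laplace transform satisfies, for $\Real\lambda>0$,
\[
\widehat q(\lambda)=(\lambda-A)^{-1}A^{-1}=\frac{1}{\lambda}\bigl((\lambda-A)^{-1}+A^{-1}\bigr),
\]
and the right-hand side is holomorphic on $U\setminus\{0\}$ with a removable singularity at $0$ (the parenthesis vanishing there because $0\in\rho(A)$); so $\widehat q$ extends holomorphically to $U$, in particular continuously across $i\R$. At this point I would invoke the vector-valued Ingham-Karamata Tauberian theorem \cite{K-TaubTh,QuantifiedVersionsInghamTheorem}, applied to the bounded, uniformly continuous $\L(X)$-valued function $q$: it yields convergence of $\int_{0}^{T}q(t)\,dt$ as $T\to\infty$, and uniform continuity then upgrades this to $\lim_{t\to\infty}\|q(t)\|_{\L(X)}=0$, i.e.\ \eqref{SemigroupLittleO}. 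Together with boundedness of $\mathcal T$ and $0\notin\sigma(A)$, this is exactly semi-uniform stability.

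\smallskip
The one substantial ingredient is the Ingham-Karamata theorem invoked in the second half; granting it, both implications reduce to the routine manipulations above. The delicate point is that one must exploit the \emph{holomorphic} extension of $\widehat q$ across $i\R$ rather than any boundedness of $(\lambda-A)^{-1}$ there --- indeed the resolvent need not stay bounded as $\Real\lambda\to0$ along $i\R$, which is why a Tauberian argument replaces a naive contour shift.
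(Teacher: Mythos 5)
The paper does not prove Theorem \ref{TheoremSemigroupQualitative}; it is quoted as a known result and attributed to Arendt and Batty \cite{A-B88} (see also \cite{B-D-NonUniformStabBoundedSemiGroupsBanachSp,C-S-T-SemiUniformStabOpSemigroupsEnergyDecayDampedWaves}), so there is no in-text argument to compare against. That said, your reconstruction is correct and is essentially the classical route. A few small remarks. In the necessity part, the identity you write should read $A^{-1}x_n-(is)^{-1}x_n=-(is)^{-1}A^{-1}(A-is)x_n$, but the sign is immaterial for the norm bound; the use of the fact that boundary points of $\sigma(A)$ lie in the approximate point spectrum of a closed operator is standard and correct. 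In the sufficiency part, the partial-fraction rewriting of $(\lambda-A)^{-1}A^{-1}$ is a harmless detour: $(\lambda-A)^{-1}A^{-1}$ is already holomorphic on all of $\rho(A)\supseteq U$, including at $\lambda=0$, so no removable-singularity argument is needed. It is also worth making explicit why you apply the vector-valued Ingham theorem to $q(t)=T(t)A^{-1}$ rather than to $T(t)$ itself: $T(\cdot)$ need not be norm-measurable as an $\L(X)$-valued map, whereas $q$ is Lipschitz in operator norm (as you correctly derive from $q(t)x-q(s)x=\int_s^t T(u)x\,du$), which is exactly what makes the operator-valued Tauberian theorem applicable and what produces decay in the \emph{operator norm} rather than merely in the strong topology. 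Finally, the step from convergence of $\int_0^T q$ to $q(t)\to 0$ does require a short Hahn--Banach argument in the vector-valued setting (pick norming functionals at the putative non-small values of $q$ and integrate over a short interval using uniform continuity); alternatively, one can invoke the form of the Ingham--Karamata theorem that directly yields $q(t)\to 0$ for bounded uniformly continuous $q$ whose Laplace transform extends holomorphically across $i\R$, which streamlines the finish.
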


The natural question then becomes whether one can quantify the decay in \eqref{SemigroupLittleO}. The aim of this note is to show that it is impossible to deduce any quantified decay rate for a semi-uniformly stable operator semigroup from mere knowledge of the spectrum of its generator, so that the only admissible decay rate under hypotheses on the location of $\sigma(A)$ is $o(1)$ itself. Concretely, we shall prove the following result.

		\begin{theorem}
		\label{t:ExampleAbsenceOfRemainder}
     		Let \(r\) be a positive function tending to \(0\). There exists a bounded $C_{0}$-semigroup \(\mathcal{T}=(T(t))_{t\geq 0}\) on a certain Banach space \(X\), with infinitesimal generator $A$, such that \( \sigma(A) = \varnothing \) and, for some $x \in X$, 
			\begin{equation}
				\label{MainResultEstimation2}
         			\limsup_{t\to\infty} \frac{\|T(t)A^{-1} x\|_{X}}{r(t)}=\infty.     			\end{equation}
	\end{theorem}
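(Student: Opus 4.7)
The plan is to realize \(X\) as a direct sum \(X=\bigoplus_n X_n\) carrying the direct-sum semigroup \(\mathcal{T}=\bigoplus_n \mathcal{T}_n\), where each piece \((X_n,\mathcal{T}_n,A_n)\) is a ``nilpotent-type'' \(C_0\)-semigroup whose extinction time \(\tau_n\) grows rapidly with \(n\). The intuition is that if \(\tau_n\to\infty\) fast enough, at times \(t_n\sim \tau_n/2\) the \(n\)-th component should contribute dominantly to \(\|T(t_n)A^{-1}x\|\) for a carefully chosen \(x=\sum_n \epsilon_n x_n\), while \(r(t_n)\) has already become much smaller than this contribution, producing \(\|T(t_n)A^{-1}x\|/r(t_n)\to\infty\) along the subsequence \((t_n)\).

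On each summand the natural model is the nilpotent left-translation semigroup on \(L^p([0,\tau_n])\), generated by \(A_n f = f'\) with \(f(\tau_n)=0\). It is standard that \(\sigma(A_n)=\varnothing\), and a direct computation yields \(\|T_n(t) A_n^{-1}\mathbf{1}_{[0,\tau_n]}\|_{L^p}\) proportional to \((\tau_n-t)^{1+1/p}\). Consequently, for the unit vector \(x_n = \tau_n^{-1/p}\mathbf{1}_{[0,\tau_n]}\) one has \(\|T_n(\tau_n/2)A_n^{-1}x_n\|\) of order \(\tau_n\); choosing \(\tau_n\) with \(\tau_n/r(\tau_n/2)\to\infty\) very rapidly, together with weights \(\epsilon_n\) summable enough to make \(x = \sum_n \epsilon_n x_n\) a legitimate element of \(X\) but satisfying \(\epsilon_n\tau_n / r(\tau_n/2)\to\infty\), would then force the desired \(\limsup\).

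The main obstacle is that this naive direct sum does \emph{not} have empty spectrum: a point \(\lambda\) lies in \(\rho(A)\) only if both \(\lambda\in\rho(A_n)\) for every \(n\) and \(\sup_n\|R(\lambda,A_n)\|_{\L(X_n)}<\infty\). However, for the plain nilpotent translation one has \(\|R(i\beta,A_n)\|\) of order \(\tau_n\) (independently of \(\beta\in\R\)) and similarly \(\|A_n^{-1}\|\) of order \(\tau_n\), so that the naive direct sum produces \(\{\Real\lambda\le 0\}\subset\sigma(A)\) and renders \(A^{-1}\) unbounded. Overcoming this is the technical heart of the argument: one must modify each \((X_n,A_n)\)---for instance, by combining an \(n\)-dependent spectral shift \(A_n\rightsquigarrow A_n - \alpha_nI\) with \(\alpha_n\to\infty\) and an adapted renormalization of \(X_n\)---so that the resolvents are uniformly controlled in \(n\) at every \(\lambda\in\C\) while the slow-decay lower bound on \(\|T_n(t_n)A_n^{-1}x_n\|\) is retained. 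Finding a modification that simultaneously achieves these two opposing requirements---long extinction times on each piece versus uniform spectral control across \(n\)---is the delicate part of the construction.
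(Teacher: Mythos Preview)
Your proposal is not a proof: it correctly identifies the natural direct-sum construction and its main obstacle (lack of uniform resolvent control across the summands), but it stops precisely at the point where the real work begins. You write that the needed modification is ``the delicate part of the construction'' without actually supplying one, so as it stands nothing has been proved.

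More seriously, the specific remedy you suggest---a real spectral shift \(B_n=A_n-\alpha_n I\) with \(\alpha_n\to\infty\)---cannot work. After shifting, the \(n\)-th semigroup becomes \(S_n(t)=e^{-\alpha_n t}T_n(t)\), hence \(\|S_n(t)\|\le e^{-\alpha_n t}\). The direct-sum semigroup then satisfies \(\|S(t)\|\le e^{-(\inf_n\alpha_n)t}\), so if \(\inf_n\alpha_n>0\) the whole semigroup is uniformly exponentially stable and \(\|S(t)A^{-1}\|\) decays exponentially; for any \(r\) decaying more slowly than an exponential (say \(r(t)=1/t\)) the desired \(\limsup\) is \(0\), not \(\infty\). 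If instead \(\inf_n\alpha_n=0\), then for any \(\lambda\) with \(\Real\lambda<0\) there are infinitely many \(n\) with \(\Real(\lambda+\alpha_n)<0\), and for these \(\|R(\lambda,B_n)\|=\|R(\lambda+\alpha_n,A_n)\|\) grows like \(e^{|\Real\lambda|\tau_n}\), so \(\sigma(A)\) again contains the closed left half-plane. Purely imaginary shifts \(A_n-i\beta_n I\) do not help either: feeding \(f(s)=e^{i\beta_n s}\) into \(R(i\beta_n,A_n)\) yields \(\|R(i\beta_n,A_n)\|\gtrsim\tau_n\) for every \(\beta_n\), so the resolvent blow-up on the imaginary axis persists. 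In short, the two requirements you list---long extinction times for the lower bound, yet uniform resolvent control for \(\sigma(A)=\varnothing\)---are genuinely incompatible within this family of modifications, and you have not indicated any mechanism that escapes this dichotomy.

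The paper proceeds along an entirely different route. Rather than a direct sum, it fixes a single left-translation semigroup on a purpose-built Banach space \(X\) of bounded uniformly continuous functions on \([0,\infty)\) whose Laplace transforms extend to entire functions with a prescribed growth bound. The crucial input is the scalar Ingham--Karamata counterexample (Theorem~\ref{t:CounterExampleScalarCase}): a function \(g\in C^1[0,\infty)\) with \(g'\) vanishing at infinity, \(\widehat{g}\) entire, and \(\limsup_{t\to\infty}|g(t)|/r(t)=\infty\). The weight \(M(\lambda)\) defining the norm of \(X\) is tailored to \(\widehat{g}\) so that (i) both \(g\) and \(g'\) belong to \(X\); (ii) the translation semigroup is bounded on \(X\) (this step uses Levinson's log--log lemma and an auxiliary continuity lemma for \(\widehat{g}\)); and (iii) for every \(\lambda\in\C\) the map \(\lambda-A\) is bijective on \(D(A)\), giving \(\sigma(A)=\varnothing\). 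The slow decay then follows directly from \(\|T(t)g\|_X\ge|g(t)|\) with \(x=g'=Ag\). The point is that the difficulty you ran into is circumvented by building the resolvent control into the \emph{norm} of a single space rather than trying to patch together infinitely many pieces.
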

	
It is worth comparing Theorem \ref{t:ExampleAbsenceOfRemainder} with a result of van Neerven \cite[Lemma 4.6]{vanNeerven95} (cf. \cite{meuller88}), which implies that if a semigroup is not uniformly stable (namely, $\|T(t)\|_{L(X)}\not\to 0$), then, given an arbitrary positive continuous function $r$ tending to $0$, one can always find an element $y \in X$ such that  
\begin{equation}
\label{eq:orbit non decay} \lim_{t\to\infty} \frac{\|T(t)y\|_{X}}{r(t)}=\infty.     	
\end{equation}
It should then be emphasized that, in general, \eqref{MainResultEstimation2} does not follow from \eqref{eq:orbit non decay} (unless one would additionally know that $y$ belongs to the domain of $A$, but such information is not delivered by van Neerven's result).

The proof of Theorem \ref{t:ExampleAbsenceOfRemainder} will be given in the next section. 
We will adjust a technique that originates from \cite[Section 4]{OptimalPolynomialDecay}.
The semigroup itself will always
 be\footnote{We point out that a reverse strategy, that is, fixing first a Banach space and then trying to find a semigroup so that the properties from Theorem \ref{t:ExampleAbsenceOfRemainder} are fulfilled, might fail depending on the chosen Banach space $X$ and the given function $r$. This is certainly the case for any Banach Grothendieck space $X$ with the Dunford-Pettis property if $|\log r(t)|=o(1)$, because on such spaces every semi-uniformly stable semigroup is uniformly (exponentially) stable, as follows from a result due to Lotz \cite[p.~56]{positive semigroups book} (cf. \cite{vanNeerven92}). Familiar examples of such Banach spaces are any $L^\infty$ space, the space of bounded harmonic functions on an open subset of $\mathbb{R}^{n}$, or the Hardy space $H^{\infty}(\Omega)$ on a finitely connected domain $\Omega\subset \C$; see \cite[p.~57]{positive semigroups book}.} the left translation semigroup acting on a certain Banach space $X$ of functions,
and with its generator being simply the derivative operator. 
The main difficulty lies in finding a suitable Banach space and an element in it on which the left translation semigroup achieves the desired properties.
Our constructions essentially rely on the scalar-valued counterpart of Theorem \ref{t:ExampleAbsenceOfRemainder}, which establishes the impossibility of obtaining remainders in the classical Ingham-Karamata theorem by just knowing the analytic continuation properties of Laplace transforms \cite{F-G-V-AbsRemIKandWI,D-V-NoteAbsenceRemWI}. We shall employ the version of such a result stated in \cite{C-N-V-NoteDensFuncEntLaplaceTrans}.

\begin{theorem}[{\cite[Theorem 1.2]{C-N-V-NoteDensFuncEntLaplaceTrans}}]
		\label{t:CounterExampleScalarCase}
		For any positive function $r$ tending to $0$, one can find a function $g \in C^1[0, \infty)$ such that $g'$ vanishes at $\infty$, its Laplace transform 
		
		\[
		\widehat{g}(\lambda)=\int_{0}^{\infty} g(t)e^{-\lambda t}dt, \qquad \Real{\lambda}>0,
		\]
admits entire extension to $\C$, and such that 
			\begin{equation}
				\label{eq:CounterExampleScalarCase} 
			\limsup_{t\to\infty} \frac{|g(t)|}{r(t)}=\infty.
			\end{equation}
	\end{theorem}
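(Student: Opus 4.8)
\textbf{Proof strategy for Theorem \ref{t:CounterExampleScalarCase}.}
The plan is to take $g$ to be a \emph{slowly oscillating chirp with a slowly decaying amplitude}, $g(t)=a(t)\cos\omega(t)$ (or $a(t)e^{i\omega(t)}$ if a complex-valued example is allowed), where $a$ is entire of exponential type, real, bounded, and tends to $0$ along $\mathbb R_+$ \emph{as slowly as we wish}, while the phase $\omega$ is analytic on the slit plane $\C\setminus(-\infty,0]$ with $\omega'$ increasing to $+\infty$ \emph{as slowly as we wish}. The amplitude is produced by the following gadget: for any non-increasing null sequence $(\varepsilon_k)$,
$$a(t):=\sum_{k\geq 1}\varepsilon_k\left(\frac{\sin(t-\pi k)}{t-\pi k}\right)^2$$
converges locally uniformly together with all its derivatives (because $\sum_{k\in\mathbb Z}\bigl(\sin(t-\pi k)/(t-\pi k)\bigr)^2\equiv 1$), so $a$ is entire of exponential type $\leq 2$, is bounded with $a(t)\to 0$, satisfies $|a(t)|+|a'(t)|\lesssim \varepsilon_{\lfloor t/(2\pi)\rfloor}+1/t$, and — this is the point — $a(\pi k)=\varepsilon_k$. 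Thus $a$ can be made to vanish at infinity slower than any prescribed rate.

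Now replace $r$ by the non-increasing majorant $\bar r(t):=\min\{1,\sup_{s\geq t}r(s)\}$; since $\bar r\geq r$ eventually, it suffices to arrange \eqref{eq:CounterExampleScalarCase} with $\bar r$ in place of $r$. Pick $L\colon(0,\infty)\to[1,\infty)$ analytic on $\C\setminus(-\infty,0]$, non-decreasing, \emph{strongly slowly varying} in the sense $tL'(t)/L(t)\to 0$, with $L(t)\to\infty$ but $L(t)=o\bigl(\min\{t,\ \bar r(t)^{-1/2}\}\bigr)$; such an $L$ exists by applying a standard analytic-minorant construction to an iterated-logarithm-type minorant of $\min\{\sqrt t,\bar r(t)^{-1/2}\}$. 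Set $\omega(t):=\int_1^t L$ and $\varepsilon_k:=L(\pi k)^{-2}$. Then slow variation yields $L(2\pi(m+1))\leq 2L(\pi m)$, so for $t\in[2\pi m,2\pi(m+1))$ one gets $|a(t)|L(t)\lesssim \varepsilon_m L(2\pi(m+1))+L(t)/t\lesssim L(\pi m)^{-1}+L(t)/t\to 0$, whence $g'(t)=a'\cos\omega-aL\sin\omega\to 0$. Moreover $|g(\pi k)|=\varepsilon_k|\cos\omega(\pi k)|$; since $\cos^2\omega(\pi k)+\sin^2\omega(\pi k)=1$, after possibly replacing $\cos\omega$ by $\sin\omega$ we may assume $|\cos\omega(\pi k)|\geq 1/\sqrt2$ for infinitely many $k$, and for those $k$
$$\frac{|g(\pi k)|}{r(\pi k)}\ \geq\ \frac{\varepsilon_k}{\sqrt2\,r(\pi k)}\ =\ \frac{1}{\sqrt2\,L(\pi k)^2\,r(\pi k)}\ \geq\ \frac{1}{\sqrt2\,L(\pi k)^2\,\bar r(\pi k)}\ \longrightarrow\ \infty,$$
so $\limsup_{t\to\infty}|g(t)|/r(t)=\infty$, and $g\in C^\infty[0,\infty)$ with $g'$ vanishing at infinity.

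It remains to prove that $\widehat g$ continues to an entire function, which is the analytic heart of the matter. Write $\widehat g=\tfrac12(\widehat{ae^{i\omega}}+\widehat{ae^{-i\omega}})$ and treat $\widehat{ae^{i\omega}}$. For $\operatorname{Re}\lambda>0$ the integral $\int_0^\infty a(t)e^{i\omega(t)-\lambda t}\,dt$ converges; I would split off the harmless finite piece over $[0,1]$ and rotate the remaining contour onto the ray $\{ue^{i\theta_0}:u\geq 1\}$ for a fixed small $\theta_0>0$. The decisive estimate, obtained by integrating $\omega'=L$ along the connecting arc and using $tL'/L\to 0$, is $\operatorname{Im}\omega(ue^{i\theta_0})\gtrsim \tfrac{\theta_0}{2}\,uL(u)$ for large $u$. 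Since $a$ has exponential type $\leq 2$, the integrand on the rotated ray is then $\lesssim e^{2u\sin\theta_0-c\theta_0 uL(u)+u|\lambda|}$, which — because $L(u)\to\infty$ — is $\leq e^{-u}$ for $u$ large, uniformly on compact $\lambda$-sets; hence the rotated integral is entire in $\lambda$ and agrees with $\widehat g$ on $\{\operatorname{Re}\lambda>0\}$. The rotation is legitimate by Cauchy's theorem together with the vanishing of the circular arcs as the radius $\to\infty$: for $\operatorname{Re}\lambda>0$ and $\theta_0$ small (depending on $\lambda$) the factor $e^{-t\operatorname{Re}(\lambda e^{i\phi})}$ supplies a net exponential decay on the part of the arc near the real axis, while $e^{-\operatorname{Im}\omega}$ annihilates the remainder. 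The term $\widehat{ae^{-i\omega}}$ is handled identically by rotating onto $\{ue^{-i\theta_0}\}$ and using $\operatorname{Im}\omega(ue^{-i\theta_0})\lesssim -\tfrac{\theta_0}{2}uL(u)$. Therefore $\widehat g$ extends to an entire function, which finishes the proof.

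I expect the main obstacle to be the tension between the two roles of $\omega'=L$: entirety of $\widehat g$ forces $\operatorname{Im}\omega$ to grow \emph{super-linearly} along some path to infinity — equivalently $L\to\infty$ — with enough room to beat both the exponential type of $a$ and the kernel factor $e^{u|\lambda|}$; yet $L$ must tend to infinity slowly enough (comparably to $\bar r^{-1/2}$) that the amplitude $a$, hence $g$ itself, can decay slowly enough to dominate the — possibly extremely slowly decaying — $r$. Producing an $L$ that is simultaneously analytic on the slit plane, non-decreasing, strongly slowly varying, unbounded, and $o(\bar r^{-1/2})$, and carrying out the arc estimates in the contour rotation, are the steps that will require care.
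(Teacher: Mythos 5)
This statement is not proved in the paper at all: it is imported verbatim from \cite[Theorem 1.2]{C-N-V-NoteDensFuncEntLaplaceTrans}, where it is obtained via a density theorem in a weighted Banach space of functions with entire Laplace transforms, building on the constructions of \cite{F-G-V-AbsRemIKandWI,D-V-NoteAbsenceRemWI}. Your proposal is therefore a genuinely different, more hands-on route: a chirp $g=a\cos\omega$ whose amplitude is a sinc-squared series pinned to the values $a(\pi k)=\varepsilon_k=L(\pi k)^{-2}$ and whose instantaneous frequency $\omega'=L$ grows slowly to infinity. The numerology is consistent: $|g|$ is of size $L^{-2}$ at the peaks, which beats $r$ precisely when $L=o(\bar r^{-1/2})$; $g'=a'\cos\omega-aL\sin\omega$ vanishes at infinity because $aL\lesssim L^{-1}+L(t)/t$; and entirety of $\widehat g$ is to come from a contour rotation, using that $\Imaginary\omega$ grows superlinearly (like $uL(u)$) along rays off the axis, which dominates the exponential type of $a$ and the kernel $e^{-\lambda z}$ on compact $\lambda$-sets. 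If carried out, this is more explicit than the cited functional-analytic argument, though it only produces one function rather than the density statement of \cite{C-N-V-NoteDensFuncEntLaplaceTrans}.

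The step that does not hold as written is the derivation of the key estimate $\Imaginary\omega(ue^{i\theta_0})\gtrsim\theta_0\,uL(u)$ (and its uniform-in-angle version, which you also need for the vanishing of the large arcs in Cauchy's theorem). You claim it follows ``by integrating $\omega'=L$ along the connecting arc and using $tL'/L\to0$'', but $tL'(t)/L(t)\to0$ is a condition on the positive real axis only, and analyticity on the slit plane plus all your real-axis hypotheses give no control of $L$ (hence of $\Imaginary\omega$) off the axis. For instance, replacing $\omega(z)$ by $\omega(z)+e^{-z}\sin(e^{z})$ changes $\omega$ and $\omega'$ on $[0,\infty)$ only by bounded, exponentially damped oscillations, so every real-line estimate you use for $g$, $g'$ and the limsup survives, yet on the ray $\arg z=\theta_0$ the imaginary part acquires oscillations of doubly exponential size and the rotated integral need not converge. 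So the sector behaviour must be built into $L$, not deduced from real-line slow variation. This is fixable: take for example $L(z)=1+\int_1^\infty\frac{z}{z+s}\,\frac{c(s)}{s}\,ds$ with $c\geq0$, $c(s)\to0$, $\int_1^\infty c(s)s^{-1}ds=\infty$ but with $c$ scaled so that $L$ stays below a prescribed increasing minorant of $\min\{t,\bar r(t)^{-1/2}\}$. Such $L$ is analytic off $(-\infty,0]$, non-decreasing and unbounded on $(0,\infty)$, satisfies $uL'(u)\to0$, and moreover $\sup_{0\leq\phi\leq\theta_0}|L(ue^{i\phi})-L(u)|\to0$, whence $\Imaginary\omega(ue^{i\phi})=u\int_0^\phi\Real\bigl(L(ue^{i\psi})e^{i\psi}\bigr)d\psi\geq\tfrac12\,u\phi L(u)$ for large $u$, uniformly in $\phi\in[0,\theta_0]$. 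With this concrete choice of $L$, your rotated-ray bound, the arc estimate $\lesssim e^{-R\Real\lambda\cos\theta_0}$ once $L(R)$ exceeds $2+|\Imaginary\lambda|$, and hence the entire continuation of $\widehat g$, all go through; you correctly flagged this construction and the arc estimates as the places requiring care.
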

	
	Among other properties, the Banach space $X$ that we shall construct in Section \ref{sec:ProofMainThm} satisfies that both $g$ and $g'$ occurring in Theorem \ref{t:CounterExampleScalarCase}  are elements of  $X$, and, in turn, the relation  \eqref{eq:CounterExampleScalarCase} will yield \eqref{eq:orbit non decay}, because in fact we will choose $x = g'$ and be able to deduce that
\[
  \limsup_{t\to\infty} \frac{\|T(t) A^{-1} g'\|_{X}}{r(t)}=\limsup_{t\to\infty} \frac{\|T(t) g\|_{X}}{r(t)} =\infty.    
\]
	
	Finally, we should mention that, despite Theorem \ref{t:ExampleAbsenceOfRemainder}, it is actually possible to obtain quantified stability results for semigroups, which in many cases have been shown to be optimal or at least nearly optimal. However, instead of hypotheses on $\sigma(A)$, one needs to turn to resolvent bounds or related resolvent conditions. In fact, the underlying principle turns out to be: sharp resolvent bounds are (almost) equivalent to sharp decay rates. 
	There is a substantial body of literature on this topic; for instance, the reader can consult \cite{ B-D-NonUniformStabBoundedSemiGroupsBanachSp, OptimalPolynomialDecay, QuantifiedVersionsInghamTheorem, C-S-T-SemiUniformStabOpSemigroupsEnergyDecayDampedWaves, OptimalityQuantifiedVersionsInghamTheorem1, OptimalityQuantifiedVersionsInghamTheorem2, OptimalDecayHilbertSpaces, S-DecayC0Semigroups} for several results in this direction. We also refer to the extensive work \cite{debruyneGTIK}, where recently a general remainder Tauberian theory for the Ingham-Karamata theorem has been developed.

\section{The proof of Theorem \ref{t:ExampleAbsenceOfRemainder}}
\label{sec:ProofMainThm}

We move on to the proof of Theorem \ref{t:ExampleAbsenceOfRemainder}. The next technical lemma will play a role in our construction.
 We write $\overline{B}(z, \rho)$ for the closed ball centered around $z\in\mathbb{C}$ with radius $\rho > 0$.

\begin{lemma}\label{AuxiliaryLemma}
	Let $G$ be a continuous function on the strip $\{\lambda\in\C \mid |\Real {\lambda}|\leq1\}$.
	There exist a non-increasing function $\delta = \delta_G : [0, \infty) \to (0, 1)$ and a number $Q = Q_G > 0$ such that for every $v \in \R$ and $\lambda \in \overline{B}(iv, \delta(|v|))$ we have $|G(\lambda)| \leq |G(iv)| + 1$ and $\delta(|v|) / \delta(|\Imaginary{\lambda}|) \leq Q$.
\end{lemma}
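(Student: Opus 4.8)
The statement is a uniform-continuity/slow-variation packaging result: we want a single non-increasing function $\delta$ so that balls of radius $\delta(|v|)$ around $iv$ stay in the strip, keep $|G|$ controlled, and have the "doubling-type" comparison $\delta(|v|)/\delta(|\Imaginary\lambda|)\le Q$. The natural approach is to build $\delta$ from the modulus of continuity of $G$ on compact pieces of the strip, then take a monotone (and Lipschitz-in-$\log$) lower envelope to force both monotonicity and the comparison bound.

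First I would fix the compact rectangles $R_n = \{\lambda : |\Real\lambda|\le 1,\ |\Imaginary\lambda|\le n+2\}$ for $n\in\N$. Since $G$ is continuous on each $R_n$, it is uniformly continuous there, so there is $\eta_n\in(0,1)$ such that $|G(\lambda)-G(\mu)|\le 1$ whenever $\lambda,\mu\in R_n$ and $|\lambda-\mu|\le\eta_n$. Without loss of generality take the sequence $(\eta_n)$ non-increasing. Now for $s\ge 0$ I would like to set $\delta$ to be (roughly) $\eta_{\lceil s\rceil}$, which already gives the value bound $|G(\lambda)|\le|G(iv)|+1$ for $\lambda\in\overline B(iv,\delta(|v|))$ — indeed, if $|v|\le n$ and $\delta(|v|)=\eta_n$, then both $iv$ and $\lambda$ lie in $R_n$ and $|\lambda-iv|\le\eta_n$, so the uniform-continuity estimate applies (we even stay in the strip since $\eta_n<1$). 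The only missing ingredient is the comparison $\delta(|v|)/\delta(|\Imaginary\lambda|)\le Q$.

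The comparison bound is the part that needs care, because a priori $\eta_n$ could drop arbitrarily fast, and $\Imaginary\lambda$ can exceed $|v|$ by up to $\delta(|v|)<1$, which could push it into the next integer band. The fix: replace $\delta$ by a regularized version that cannot decrease too quickly. Concretely I would define, for $s\ge0$,
\[
  \delta(s) = \inf_{0\le u\le s} \Big( \min(\eta_{\lceil u\rceil}, 1) \cdot 2^{-(s-u)} \Big),
\]
or equivalently build $\delta$ by hand so that $\delta$ is non-increasing, $\delta(s)\le\eta_{\lceil s\rceil}$ for all $s$, and $\delta(s')\ge \tfrac12\,\delta(s)$ whenever $s\le s'\le s+1$. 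The first two properties preserve the value estimate (shrinking the radius only helps), and positivity of $\delta$ is clear since on each bounded interval the infimum is over a compact set of positive continuous expressions. The last property is exactly what controls the ratio: for $\lambda\in\overline B(iv,\delta(|v|))$ we have $\big||\Imaginary\lambda|-|v|\big|\le|\lambda-iv|\le\delta(|v|)\le 1$, so $|\Imaginary\lambda|$ lies within distance $1$ of $|v|$ (on either side), whence $\delta(|\Imaginary\lambda|)\ge\tfrac12\delta(\max(|v|,|\Imaginary\lambda|))\ge\tfrac12\delta(\text{the larger of the two, then apply the step estimate once more if needed})$; being slightly careful, two applications of the one-step bound give $\delta(|v|)/\delta(|\Imaginary\lambda|)\le 4$, so we may take $Q=4$.

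The one genuine obstacle is making the regularization step airtight: one must verify that the smoothed $\delta$ is still a legitimate lower bound for the raw moduli of continuity on the relevant rectangle (so that the $|G(\lambda)|\le|G(iv)|+1$ conclusion survives), while simultaneously being non-increasing and enjoying the one-step comparison $\delta(s+1)\ge\tfrac12\delta(s)$. Everything else — uniform continuity on compacta, positivity via compactness, and the elementary estimate $\big||\Imaginary\lambda|-|v|\big|\le|\lambda-iv|$ — is routine. I would present the construction of $\delta$ explicitly (as the infimum above or via an inductive definition of its values on $[n,n+1]$), check the three defining properties in one short paragraph each, and then read off the two conclusions of the lemma.
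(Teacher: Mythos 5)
Your overall strategy (regularize the modulus of continuity of $G$ on compact rectangles to get a non-increasing $\delta$ with a controlled ratio) is morally the same as the paper's, but the specific regularization you propose does not work, and the failure is not a technicality that can be patched in place.

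First, the explicit formula
\[
  \delta(s) = \inf_{0\le u\le s} \bigl( \eta_{\lceil u\rceil}\, 2^{-(s-u)} \bigr)
\]
does \emph{not} satisfy the one-step comparison $\delta(s')\ge\tfrac12\delta(s)$ for $s\le s'\le s+1$. The infimum only smooths the sequence $(\eta_n)$ ``forwards'': it prevents $\delta$ from being larger than a geometrically decaying envelope, but it does nothing to prevent $\delta$ from dropping abruptly when $s$ passes an integer at which $\eta_n$ suddenly becomes tiny. Concretely, with $\eta_0=\tfrac12$, $\eta_1=\tfrac14$, $\eta_2=2^{-100}$ one gets $\delta(1)=\tfrac18$ while $\delta(1.5)\le 2^{-100.5}$, so $\delta(1)/\delta(1.5)$ is astronomically large rather than $\le 2$.

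Second, and more fundamentally, the three properties in your ``equivalently build $\delta$ by hand'' wish list are \emph{mutually incompatible} for some continuous $G$. The one-step bound $\delta(s+1)\ge\tfrac12\delta(s)$ forces $\delta(n)\ge 2^{-n}\delta(0)$, while the requirement $\delta(s)\le\eta_{\lceil s\rceil}$ forces $\delta(n)\le\eta_n$. Hence you would need $\eta_n\ge 2^{-n}\delta(0)$ for all $n$, with $\delta(0)>0$ fixed. But there are continuous $G$ on the strip whose local moduli of continuity decay faster than any geometric sequence (e.g.\ $G$ oscillating at frequency $e^{e^{|\Imaginary\lambda|}}$), so no such $\delta$ can exist. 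The underlying misstep is requiring a ratio bound over a fixed unit step; the lemma only needs it over a step of size $\delta(|v|)$, since that is the radius of the ball in which $\Imaginary\lambda$ ranges. A fixed-step regularization over-commits.

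The paper avoids exactly this pitfall by making the construction self-referential: $\delta(a)$ is defined as \emph{half} of the supremum of admissible radii $\rho\in(0,1]$ for which $|G|$ oscillates by at most $1$ on every ball of radius $\rho$ centered on $i[-a,a]$. The crucial trick is the factor $\tfrac12$: for $\lambda\in\overline{B}(iv,\delta(|v|))$ with $|\Imaginary\lambda|>|v|$, the shift $|\Imaginary\lambda-v|\le\delta(|v|)$ eats exactly half the available margin, and one shows that the reduced radius $\rho_\lambda=2\delta(|v|)-|\Imaginary\lambda-v|\ge\delta(|v|)$ is still admissible at height $|\Imaginary\lambda|$. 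This yields $2\delta(|\Imaginary\lambda|)\ge\rho_\lambda\ge\delta(|v|)$, i.e.\ $Q=2$ away from the origin, without ever imposing a geometric lower bound on $\delta$. If you wish to salvage your approach, you would need to replace the fixed unit step by a step of length $\delta(s)$ and define $\delta$ via a supremum of admissible radii, which is essentially the paper's construction.
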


\begin{proof}
	For any $a \geq 0$, we define the set
		\[ \Delta_{a} = \{ \rho \in(0,1] \mid \forall w \in [-a,a] , ~ \forall \lambda, \mu \in \overline{B}(i w, \rho) : ~ |G(\lambda)| \leq |G(\mu)| + 1 \} . \]
	Clearly $\Delta_{b} \subseteq \Delta_{a}$ whenever $a \leq b$. Also, each $\Delta_{a}$ is non-empty in view of the uniform continuity of $G$ on compact subsets.
	
	Now, we define our function $\delta = \delta_G$ as
		\[ \delta(a) = \frac{\sup \Delta_{a}}{2} , \qquad a \geq 0 . \] 
	So $\delta$ is a non-increasing function $[0, \infty) \to (0, 1/2]$. 

	Since for any $v \in \R$ we have $\delta(|v|) < \sup \Delta_{|v|}$, it immediately follows from the definition of $\Delta_{|v|}$ that $|G(\lambda)| \leq |G(iv)| + 1$ for each $\lambda \in \overline{B}(iv, \delta(|v|))$.
	It remains to verify the second claim.
	Set
		\[ Q = Q_G = \max \left\{ 2 , \frac{\delta(0)}{\delta(3/2)} \right\} . \]
	For any $v \in \R$ with $|v| \leq 1$ and $\lambda \in \overline{B}(iv, \delta(|v|)) \subseteq \overline{B}(iv, 1/2)$, we have $|\Imaginary{\lambda}| \leq 3/2$, so in particular $\delta(|v|) / \delta(|\Imaginary{\lambda}|) \leq Q$.
	Hence, we are left with the case $|v| > 1$.
	 If $|\Imaginary{\lambda}| \leq |v|$ and $\lambda \in \overline{B}(iv, \delta(|v|))$, then trivially $\delta(|v|) / \delta(|\Imaginary{\lambda}|) \leq 1 \leq Q$.
	So, we may assume that $|\Imaginary{\lambda}| > |v|$.
	In this case, we claim that
		\begin{equation} 
			\label{eq:ClaimDelta}
			0 < 2 \delta(|v|) - |\Imaginary{\lambda} - v| =: \rho_\lambda \in \Delta_{|\Imaginary{\lambda}|} . 
		\end{equation}
	If this were true, then,
		\begin{align*} 
			2 \delta(|\Imaginary{\lambda}|) 
			&= \sup \Delta_{|\Imaginary{\lambda}|}
			\geq \rho_\lambda
			= 2 \delta(|v|) - |\Imaginary{\lambda} - v|
			\geq \delta(|v|) ,
		\end{align*}
	showing that $\delta(|v|) / \delta(|\Imaginary{\lambda}|) \leq Q$.
	
We now verify that \eqref{eq:ClaimDelta}
holds true for any $|v| > 1$ and $\lambda \in \overline{B}(iv, \delta(|v|))$ with $|\Imaginary{\lambda}| > |v|$.
Fix such $v$ and $\lambda$.
 Then, the numbers $\Imaginary{\lambda}$ and $v$ have the same sign, so that $|\Imaginary{\lambda} - v| = |\Imaginary{\lambda}| - |v|$.
	Take any $|w| \leq |\Imaginary{\lambda}|$ and $\xi, \zeta \in \overline{B}(iw, \rho_\lambda)$.
	If $|w| \leq |v|$ then $|G(\xi)| \leq |G(\zeta)| + 1$ since $\rho_{\lambda} < 2 \delta(|v|) = \sup \Delta_{|v|}$.
	Suppose now that $|v| \leq |w| \leq |\Imaginary{\lambda}|$.
	Then,
		\[
			| \xi - \sign(w) i |v|| \leq |\xi - iw| + |i w - \sign(w) i |v|| \leq \rho_\lambda + |w| - |v| \leq 2 \delta(|v|) ,
		\]
	so that $\xi$, and analogously $\zeta$, is contained in $\overline{B}(\sign(w) i |v|, 2 \delta(|v|)) =\overline{B}(\sign(w) i |v|, \sup \Delta_{|v|})$.
	Consequently, using that $\sup \Delta_{|v|}\in \Delta_{|v|}$, which follows from the continuity of $G$, also here $|G(\xi)| \leq |G(\zeta)| + 1$, and we may conclude that $\rho_\lambda \in \Delta_{|\Imaginary{\lambda}|}$, thereby finishing the proof.
\end{proof}

We are now sufficiently prepared for our proof of Theorem \ref{t:ExampleAbsenceOfRemainder}.

\begin{proof}[Proof of Theorem \ref{t:ExampleAbsenceOfRemainder}] 
Let $g$ be as in Theorem \ref{t:CounterExampleScalarCase} for $r$ and let $\delta$ be the function and $Q \geq 1$ the constant as in Lemma \ref{AuxiliaryLemma} for $G = \widehat{g}$.
Consider the vector space $X$ of all bounded uniformly continuous functions $f : [0, \infty) \to \C$ whose Laplace transform $\widehat{f}$ analytically extends to the whole of $\C$ and satisfies
	\begin{equation}
		\label{eq:SpaceXLimitCond} 
		\frac{| \widehat{f}(\lambda) |}{M(\lambda)} \to 0 , \qquad \text{as } |\lambda| \to \infty , 
	\end{equation}
where
	\[ M(\lambda) = \frac{(1 + |\lambda|)^{2} \sup_{|\Real{\mu}| \leq |\Real{\lambda}|, \Imaginary{\mu} = \Imaginary{\lambda}} (1 + |\widehat{g}(\mu)|)}{\delta(|\Imaginary{\lambda}|)} , \qquad \lambda \in \C . \]
Notice $M(\lambda)\geq 1$. We endow $X$ with the norm
	\[ \| f \|_{X} = \|f\|_{L^{\infty}} + \|\widehat{f}\|_{L^{\infty}_{M}} := \|f\|_{L^{\infty}(\R_+)} + \sup_{\lambda \in \C} \frac{\vert\widehat{f}(\lambda)\vert}{M(\lambda)} , \]
turning it into a Banach space. Note that, for any bounded measurable function $f : [0, \infty) \to \C$ and $\lambda \in \C$ with $\Real{\lambda} > 0$, we have
	\[ |\widehat{f}(\lambda)| \leq \frac{\|f\|_{L^{\infty}}}{\Real{\lambda}} . \]
Consequently,
	\[ \|\widehat{f}\|_{L^{\infty}_{M}} \leq \|f\|_{L^{\infty}} + \sup_{\Real{\lambda} < 1} \frac{\vert\widehat{f}(\lambda)\vert}{M(\lambda)} , \]
and we just have to verify \eqref{eq:SpaceXLimitCond} for $\Real{\lambda} < 1$, since $M(\lambda) \to \infty$ as $|\lambda| \to \infty$.

Let us consider the left translation semigroup $\mathcal{T} = (T(t))_{t \geq 0}$ on $X$, i.e., $T(t) f(s) = f(s+ t)$.
We start by verifying that it defines a $C_{0}$-semigroup on $X$.
Indeed, for a fixed $t \geq 0$ clearly $T(t) f$ is a bounded uniformly continuous function with $\|T(t) f\|_{L^{\infty}} \leq \|f\|_{L^{\infty}}$. Moreover, for any $\lambda \in \C$ with $\Real{\lambda} > 0$, we have
	 \begin{equation}
	 	\label{LaplaceOfSemigroup}
    		\widehat{T(t)f}(\lambda) = e^{\lambda t} \widehat{f}(\lambda) - e^{\lambda t} \int^{t}_0 f(s)e^{-\lambda s}\, ds.
    	\end{equation}
The right-hand side of \eqref{LaplaceOfSemigroup} extends analytically to the whole of $\C$ and, for $\Real{\lambda} < 1$,
	\[ \frac{|\widehat{T(t)f}(\lambda)|}{M(\lambda)} \leq e^{t} \frac{|\widehat{f}(\lambda)|}{M(\lambda)} + t e^{t} \frac{\|f\|_{L^{\infty}}}{M(\lambda)} \leq (1 + t) e^t \|f\|_{X} . \]
From here, we may deduce that $T(t) \in L(X)$. 
To show that $\mathcal{T}$ is strongly continuous, it suffices to verify that, for a fixed $f\in X$, we have $\|T(t) f - f\|_{X} \to 0$ as $t \rightarrow 0^{+}$.
Since $f$ is uniformly continuous, it is clear that $\|T(t) f - f\|_{L^{\infty}} \to 0$ as $t \rightarrow 0^{+}$.
On the other hand, by \eqref{LaplaceOfSemigroup}, we find, for arbitrary $\rho>0$,
\begin{align*}
     \limsup_{t\to0^{+}}  & \sup_{\Real{\lambda} < 1}  \frac{|\widehat{T(t)f}(\lambda) - \widehat{f}(\lambda)|}{M(\lambda)}
        &
         \\
        &\leq      \limsup_{t\to0^{+}}  \sup_{\substack{\Real{\lambda} < 1\\ |\lambda| \geq \rho}}\frac{\vert e^{\lambda t}-1\vert \vert\widehat{f}(\lambda)\vert }{M(\lambda)} +   \lim_{t\to0^{+}}\sup_{\substack{\Real{\lambda} < 1\\ \vert\lambda| \leq \rho}}\frac{\vert e^{\lambda t}-1\vert \vert\widehat{f}(\lambda)\vert }{M(\lambda)} +\lim_{t\to 0^{+}} te^{t}\Vert f\Vert_{L^{\infty}}  \\
        &= \limsup_{t\to0^{+}} \sup_{\substack{\Real{\lambda} < 1 \\ |\lambda| \geq \rho}}\frac{\vert e^{\lambda t}-1\vert \vert\widehat{f}(\lambda)\vert }{M(\lambda)} \leq 2 \sup_{\substack{\Real{\lambda} < 1 \\ |\lambda| \geq \rho}}\frac{ \vert\widehat{f}(\lambda)\vert }{M(\lambda)}.
    \end{align*}
    Taking $\rho\to\infty$, we obtain $\|T(t) f - f\|_{X} \to 0$ as $t \to 0^{+}$, and thus $\mathcal{T}$ is a $C_{0}$-semigroup on $X$.
Its generator $A$ is given by
	\[ A f = f' , \]
with domain 
    	\[ D(A) = \{ f \in X \cap C^{1}[0,\infty) \mid f' \in X \} . \]
This follows exactly as in \cite[Chapter II, Proposition 1, p.~51]{course-operatorsemigroups}.

Next, we verify that $\mathcal{T}$ is a bounded $C_{0}$-semigroup on $X$.
Obviously, as noted before, $\|T(t) f\|_{L^{\infty}} \leq \|f\|_{L^{\infty}}$ for any $f \in X$. Therefore, we only need to show that there exists some constant $C > 0$ such that
	 \[ \|\widehat{T(t)f}\|_{L^{\infty}_{M}} \leq C \|f\|_{X}, \qquad \forall f \in X , ~ t \geq 0 . \]
For fixed $f \in X$ and $t \geq 0$, we consider the entire function $F = \widehat{T(t) f}$, which satisfies
	\[
		| F(\lambda) | 
		\leq
		|\widehat{f}(\lambda)| + \frac{\|f\|_{L^{\infty}}}{|\Real{\lambda}|} , \qquad \Real{\lambda} \neq 0 .
	\]
Hence, for every $v \in \R$ and $\lambda \in \overline{B}(iv, \frac{1}{M(iv)})$ with $\Real{\lambda} \neq 0$,
	\[  |F(\lambda)| \leq \frac{\|f\|_X}{|\Real{\lambda}|} \left(1 + \frac{M(\lambda)}{M(iv)}\right) . \]
Now, since $\overline{B}(iv, \frac{1}{M(i v)}) \subseteq \overline{B}(iv, \delta(|v|)) \subseteq \overline{B}(iv, 1)$, we find
	\[ \frac{M(\lambda)}{M(iv)} = \frac{(1 + |\lambda|)^2}{(1 + |v|)^2} \cdot \frac{\sup_{|\Real{\mu}| \leq |\Real{\lambda}|, \Imaginary{\mu} = \Imaginary{\lambda}} 1 + |\widehat{g}(\mu)|}{1 + |\widehat{g}(iv)|} \cdot \frac{\delta(|v|)}{\delta(|\Imaginary{\lambda}|)} \leq 8 Q. \]
Thus

	\[ |F(\lambda)| \leq 9 Q \frac{\|f\|_X}{|\Real{\lambda}|} \]	
for every $v \in \R$ and $\lambda \in \overline{B}(iv, \frac{1}{M(iv)})$ with $\Real{\lambda}\neq 0$.
Applying Levinson's lemma \cite[Lemma 4.6.6, p.~297]{A-B-H-N-VVLaplaceTransCauchyProb}, we get that
	\[ |F(\lambda)| \leq 24 Q \|f\|_{X} M(i v) , \]
for all $v \in \R$ and $\lambda \in \overline{B}(iv,\frac{1}{2 M(iv)})$.
Using the above estimates, for any $\lambda \in \C$,
\begin{align*}
        \| \widehat{T(t) f} \|_{L^{\infty}_{M}}
        &\leq \sup_{\substack{\lambda \in \C \\ |\Real{\lambda}| \leq \frac{1}{2 M(i \Imaginary{\lambda})}}} \frac{|F(\lambda)|}{M(\lambda)} + \sup_{\substack{\lambda \in \C \\ |\Real{\lambda}| \geq \frac{1}{2 M(i \Imaginary{\lambda})}}} \frac{|F(\lambda)|}{M(\lambda)}
        \\ 
        &\leq 24 Q \sup_{\substack{\lambda \in \C \\ |\Real{\lambda}| \leq \frac{1}{2 M(i \Imaginary{\lambda})}}} \frac{M(i \Imaginary{\lambda})}{M(\lambda)} \|f\|_{X} + \sup_{\substack{\lambda \in \C \\ |\Real{\lambda}| \geq \frac{1}{2 M(i \Imaginary{\lambda})}}} \frac{|\widehat{f}(\lambda)| + \|f\|_{L^{\infty}} |\Real{\lambda}|^{-1}}{M(\lambda)}
   	\\ 
    	&\leq \|f\|_{X} \left[1 +  (2 + 24Q) \sup_{\lambda \in \C} \frac{M(i \Imaginary{\lambda})}{M(\lambda)} \right]
    	\leq (3 + 24 Q) \|f\|_{X} . 
\end{align*}
We can thus conclude that $\mathcal{T}$ is a bounded $C_{0}$-semigroup on $X$.

Let us demonstrate that $\sigma(A) = \varnothing$.
As $\mathcal{T}$ is bounded, we already have $\sigma(A) \subseteq \{ \lambda \in \C \mid \Real{\lambda} \leq 0 \}$.
Fix now any $\lambda \in \C$ with $\Real{\lambda} \leq 0$.
Because the generator \((A,D(A))\) is closed, it suffices to show that $\lambda - A : D(A) \to X$ is bijective.
Injectivity follows immediately: If $f \in D(A)$ is such that $(\lambda - A) f = 0$, then, $f(s)= c e^{\lambda s}$ for some $c \in \C$, which is only possible if $c = 0$ (since $\widehat{f}$ must be entire).
We now consider surjectivity.
Let $f \in X$ be fixed.
We claim that 
	\[ h(t) = e^{\lambda t} \widehat{f}(\lambda) - e^{\lambda t} \int^t_0 f(s) e^{-\lambda s} \, ds \] 
is the pre-image of $f$ under $\lambda-A$.
To show this, we need to prove that $h \in D(A)$ and $(\lambda - A) h = f$.
Now, $h'(t) = \lambda h(t) - f(t)$, so that $h \in C^{1}[0, \infty)$ and $(\lambda - A) h =f$. Moreover, if $h \in X$, then also $h' \in X$, so that in this case $h \in D(A)$.
It remains to verify that $h \in X$.
First, $h$ is bounded and uniformly continuous. Indeed, if $\Real{\lambda} = 0$
        \[ |h(t)| \leq |\widehat{f}(\lambda)| + \left| \int^t_0 e^{-\lambda s}f(s)\, ds \right| , \]
and the latter integral is bounded because of \cite[Theorem 1.1]{D-V-OptTaubConstIngham},
while if $\Real{\lambda}<0$
        \[ |h(t)| \leq |\widehat{f}(\lambda)| +\frac{\|f\|_{L^{\infty}}}{|\Real{\lambda}|} . \] 
Consequently, $h$ is bounded. As $h' = \lambda h - f$ is then also bounded, it follows that $h$ is uniformly continuous.
Using Fubini's theorem, one calculates that the Laplace transform of $h$ for $\Real{\mu} > 0$ is
    \[ \widehat{h}(\mu) = -\frac{\widehat{f}(\lambda) - \widehat{f}(\mu)}{\lambda-\mu} . \]
This function analytically extends to the whole of $\C$ because $f \in X$ and $\mu = \lambda$ is just a removable singularity.
Finally,
	\[ \frac{|\widehat{h}(\mu)|}{M(\mu)} =  \left| \frac{\widehat{f}(\lambda) - \widehat{f}(\mu)}{(\lambda-\mu ) M(\mu)} \right| \to 0, \qquad \text{as } |\mu| \to \infty . \]
Hence, $h \in X$, and we have shown that $\sigma(A) = \varnothing$.

Finally, we prove that \eqref{MainResultEstimation2} holds for $\mathcal{T}$ with $x = g^\prime$.
Note that $g$ and $g'$ both belong to $X$, where the boundedness of $g$ follows from the classical Ingham-Karamata theorem. 
In view of \eqref{eq:CounterExampleScalarCase}, we then have
\[ \infty= \limsup_{t\to\infty} \frac{|g(t)|}{r(t)} \leq \limsup_{t \to \infty} \frac{\|T(t) g\|_X}{r(t)} =  \limsup_{t\to\infty} \frac{\|T(t)A^{-1}g'\|_{X}}{r(t)} , \] 
completing the proof of Theorem \ref{t:ExampleAbsenceOfRemainder}.
\end{proof}


\begin{thebibliography}{99}

\bibitem{A-B88}{\sc W.~Arendt,  C.~J.~K.~Batty}, {\em Tauberian theorems and stability of one-parameter semigroups}, 
Trans. Amer. Math. Soc. \textbf{306} (1988), 837--852.

\bibitem{A-B-H-N-VVLaplaceTransCauchyProb}
{\sc W.~Arendt,  C.~J.~K.~Batty, M.~ Hieber, F.~Neubrander}, {\em Vector-valued Laplace Transforms and Cauchy Problems}, 
2nd ed., Birkh\"{a}user, Basel, 2011.

\bibitem{positive semigroups book} 
{\sc W.~Arendt, A.~Grabosch, G.~Greiner, U.~Groh,  H.~Lotz, U.~Moustakas, R.~Nagel, F.~Neubrander, U.~Schlotterbeck}, {\em One-parameter Semigroups of Positive Operators},
Lecture Notes in Math., 1184, Springer-Verlag, Berlin, 1986.

\bibitem{B-D-NonUniformStabBoundedSemiGroupsBanachSp}
{\sc C.~J.~K.~Batty, Th.~Duyckaerts}, {\em Non-uniform stability for bounded semi-groups on Banach spaces},
J. Evol. Equ. \textbf{8} (2008), 765--780.

\bibitem{OptimalPolynomialDecay}
{\sc A.~Borichev, Y.~Tomilov}, {\em Optimal polynomial decay of functions and operator semigroups}, 
Math. Ann. \textbf{347} (2010), 455--478.

\bibitem{F-G-V-AbsRemIKandWI}
{\sc F.~Broucke, G.~Debruyne, J.~Vindas}, {\em On the absence of remainders in the Ingham-Karamata and Wiener-Ikehara theorems: a constructive approach}, 
Proc. Amer. Math. Soc. \textbf{149} (2021), 1053--1060.

\bibitem{C-N-V-NoteDensFuncEntLaplaceTrans}
{\sc M.~Callewaert, L.~Neyt, J.~Vindas}, {\em On a space of functions with entire Laplace transforms and its connection with the optimality of the Ingham-Karamata theorem}, 
Complex Anal. Oper. Theory \textbf{19} (2025), Article Number 85.

\bibitem{QuantifiedVersionsInghamTheorem} 
{\sc R.~Chill, D.~Seifert}, {\em Quantified versions of Ingham's theorem}, 
Bull. Lond. Math. Soc. \textbf{48} (2016), 519--532.

\bibitem{C-S-T-SemiUniformStabOpSemigroupsEnergyDecayDampedWaves}
{\sc R.~Chill, D.~Seifert, Y.~Tomilov}, {\em Semi-uniform stability of operator semigroups and energy decay of damped waves},
Phil. Trans. R. Soc. A \textbf{378} (2020), 20190614.

\bibitem{C-T2007} 
{\sc R.~Chill, Y.~Tomilov}, {\em Stability of operator semigroups: ideas and results}, 
in: {\em Perspectives in operator theory,} pp. 71--109, Banach Center Publ., vol. 75, Polish Academy of Sciences, Institute of Mathematics, Warsaw, 2007.

\bibitem{debruyneGTIK} {\sc G.~ Debruyne}, {\em A general quantified Ingham-Karamata Tauberian theorem}, preprint,  {\tt arXiv:2411.03809}.

\bibitem{OptimalityQuantifiedVersionsInghamTheorem1} 
{\sc G.~ Debruyne D.~Seifert}, {\em An abstract approach to optimal decay of functions and operator semigroups}, 
Israel J. Math. \textbf{233} (2019), 439--451.

\bibitem{OptimalityQuantifiedVersionsInghamTheorem2} 
{\sc G.~ Debruyne D.~Seifert}, {\em Optimality of the quantified Ingham-Karamata theorem for operator semigroups with general resolvent growth}, 
Arch. Math. \textbf{113} (2019), 617--627. 

\bibitem{D-V-NoteAbsenceRemWI}
{\sc G.~Debruyne, J.~Vindas}, {\em Note on the absence of remainders in the Wiener-Ikehara theorem}, 
Proc. Amer. Math. Soc. \textbf{146} (2018), 5097--5103.

\bibitem{D-V-OptTaubConstIngham}
{\sc G.~Debruyne, J.~Vindas}, {\em Optimal Tauberian constant in Ingham's theorem for Laplace transforms}, 
Israel J. Math. \textbf{228} (2018), 557--586.

\bibitem{course-operatorsemigroups} 
{\sc K.~J.~Engel, R.~Nagel}, {\em A Short Course on Operator Semigroups}, 
Springer, New York, 2006.

\bibitem{K-TaubTh}
{\sc J.~Korevaar}, {\em Tauberian Theory. A Century of Developments}, 
Grundlehren der Mathematischen Wissenschaften, 329, Springer-Verlag, Berlin, 2004.

\bibitem{meuller88} {\sc V.~M\"{u}ller,} {\em Local spectral radius formula for operators in Banach spaces}, 
Czechoslovak Math. J \textbf{38} (1988), 726--729.

\bibitem{OptimalDecayHilbertSpaces} 
{\sc J.~Rozendaal, D.~Seifert, R.~Stahn}, {\em Optimal rates of decay for operator semigroup on Hilbert spaces}, 
Adv. Math. \textbf{346} (2019), 359--388.

\bibitem{S-DecayC0Semigroups}
{\sc R.~Stahn}, {\em Decay of $C_0$-semigroups and local decay of waves on even (and odd) dimensional exterior domains}, 
J. Evol. Equ. \textbf{18} (2018), 1633--1674.

\bibitem{vanNeerven92} 
{\sc J.~van~Neerven}, {\em A converse of Lotz's theorem on uniformly continuous semigroups}, 
Proc. Amer. Math. Soc. \textbf{116} (1992), 525--527.


\bibitem{vanNeerven95} 
{\sc J.~van~Neerven}, {\em Exponential stability of operators and operator semigroups}, 
J. Funct. Anal. \textbf{130} (1995), 293--309. 

\bibitem{vanNeervenbook} 
{\sc J.~van~Neerven,} {\em The Asymptotic Behaviour of Semigroups of Linear Operators}, 
Oper. Theory Adv. Appl., vol. 88, Birkh\"{a}user, Basel, 1996.


\end{thebibliography}
\end{document}